\DeclareMathOperator{\GL}{GL}
\DeclareMathOperator{\Mat}{Mat}
\DeclareMathOperator{\lcm}{lcm}
\DeclareMathOperator{\ord}{ord}
\DeclareMathOperator{\diag}{diag}
\newtheorem{theorem}{Theorem}[section]
\newtheorem{corollary}[theorem]{Corollary}
\newtheorem{lemma}[theorem]{Lemma}
\newtheorem{proposition}[theorem]{Proposition}
\theoremstyle{definition}
 \newtheorem{definition}[theorem]{Definition}
\theoremstyle{remark}
 \newtheorem{remark}[theorem]{Remark}
\theoremstyle{remark}
 \newtheorem{example}[theorem]{Example}
\theoremstyle{definition}
\numberwithin{equation}{section}
\begin{document}
\begin{sloppy}

\title{Carmichael numbers for $\GL(m)$}
\author{Eugene Karolinsky}
\author{Dmytro Seliutin}
\date{}

\affil{\small{Department of Pure Mathematics, Kharkiv National University, Ukraine}}

\maketitle
\begin{abstract}
We propose a generalization of Carmichael numbers, where the multiplicative group $\mathbb G_\mathrm{m} = \GL(1)$ is replaced by $\GL(m)$ for $m\geq 2$. We prove basic properties of these families of numbers and give some examples.

 \noindent\textbf{Mathematics Subject Classification (2010):} Primary 11A51; Secondary 11Y11, 20G30.

 \noindent\textbf{Keywords:} Carmichael number, pseudoprime, general linear group.
\end{abstract}

\section{Introduction}
Recall that a composite number $n\in\mathbb N$ is called \emph{Carmichael} if $a^{n-1}=1$ for any $a\in(\mathbb Z/n\mathbb Z)^\times$. In other words, Carmichael numbers are Fermat pseudoprimes to all values of $a$ (coprime to $n$).

Recently, various generalizations and analogues of Carmichael numbers were proposed, see, e.g., \cite{Howe, McIntosh, Steele} and references therein. In this paper, we introduce a different analogue of Carmichael numbers, where the multiplicative group $\mathbb G_\mathrm{m} = \GL(1)$ is replaced by $\GL(m)$ for $m\geq 2$. Namely, we start with the exponent $K_m(p)$ of the group $\GL(p,\mathbb F_p)$, extrapolate it naturally to all naturals as $K_m(n)$, and then define a composite number $n\in\mathbb N$ to be $m$-Carmichael if $A^{K_m(n)}=I$ for all $A\in\GL(m, \mathbb Z/n\mathbb Z)$. Thus, ``classical'' Carmichael numbers essentially are recovered as $1$-Carmichael.

We study basic properties of $m$-Carmichael numbers, including an analogue of the Korselt's criterion for a number to be  Carmichael in terms of its prime divisors. This criterion appears practical for numbers of reasonable size, and we compute all $m$-Carmichael numbers less or equal than $10^5$ for $2\leq m\leq10$. We also describe the structure of $m$-Carmichael numbers with given prime factors.

Some properties of $m$-Carmichael numbers for $m\geq 2$ appear to be rather different from those of ``classical'' Carmichael numbers. Namely, $m$-Carmichael numbers for $m\geq 2$ need not to be squarefree or odd. Moreover, all prime powers are $m$-Carmichael for $m\geq 2$. Possible explanation of these phenomena is the fact that the groups $\GL(m)$ for $m\geq2$ contain (many copies of) the additive group $\mathbb G_\mathrm{a}$.

The paper is organized as follows. In Section \ref{sec_2}, we define $m$-Carmichael numbers and discuss their basic properties. The main result of the paper is Theorem \ref{thm_Korselt}, an analogue of the Korselt's criterion for $m$-Carmichael numbers. In Section \ref{sec_2e}, we consider the distribution of $m$-Carmichael numbers with prescribed prime factors, giving several particular examples, and summarizing the general pattern in Theorem \ref{thm_P}. In Section \ref{sec_4}, we list some open questions and discuss possible generalizations. In Appendix, we describe our computations of relatively small $m$-Carmichael numbers.

Throughout this paper, we denote by $p$ a prime number. In particular, $\prod_{p\mid n}$ means a product taken over all prime divisors of $n$.

\section{$m$-Carmichael numbers}\label{sec_2}
Let $\Phi_k(X)$ be the $k$th cyclotomic polynomial. The following proposition is well known, but for the reader's convenience we present a proof.

\begin{proposition}
If $a\in \mathbb Z$, then
\[
\lcm(a-1,a^2-1,\ldots,a^m-1)=\prod_{k=1}^m\Phi_k(a).
\]
\end{proposition}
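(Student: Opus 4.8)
The plan is to reduce everything to the factorization $X^n-1=\prod_{d\mid n}\Phi_d(X)$, which upon substituting $X=a$ gives $a^n-1=\prod_{d\mid n}\Phi_d(a)$ for every $n$. Since $\gcd$ and $\lcm$ of integers are defined only up to sign, I will work with nonnegative representatives and may assume $a\ge 2$ (the cases $a\in\{-1,0,1\}$ are degenerate and checked by hand, and $a\le-2$ is handled identically after passing to absolute values). One inclusion is immediate: the divisors of any $j\le m$ all lie in $\{1,\ldots,m\}$, so $a^j-1=\prod_{d\mid j}\Phi_d(a)$ differs from $\prod_{k=1}^m\Phi_k(a)$ by an integer cofactor; hence each $a^j-1$ divides $\prod_{k=1}^m\Phi_k(a)$, and therefore $\lcm(a-1,\ldots,a^m-1)\mid\prod_{k=1}^m\Phi_k(a)$. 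All the content is in the reverse divisibility.

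For the reverse divisibility I would use two elementary facts: the identity $\gcd(a^i-1,a^j-1)=a^{\gcd(i,j)}-1$ (via the Euclidean algorithm on the exponents) and the distributive law $\gcd\bigl(x,\lcm(y_1,\ldots,y_r)\bigr)=\lcm\bigl(\gcd(x,y_1),\ldots,\gcd(x,y_r)\bigr)$, valid because $(\mathbb Z_{>0},\mid)$ is a distributive lattice under $\gcd$ and $\lcm$. The key move is to prove the stronger statement that $\lcm_{j\in S}(a^j-1)=\prod_{k\in S}\Phi_k(a)$ holds for every finite divisor-closed set $S\subseteq\mathbb N$ (that is, $j\in S$ and $d\mid j$ imply $d\in S$); the proposition is then the case $S=\{1,\ldots,m\}$. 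I would argue by induction on $|S|$: pick an element $m^\ast\in S$ that is maximal under divisibility, so that $S'=S\setminus\{m^\ast\}$ is again divisor-closed, and write $L'=\prod_{k\in S'}\Phi_k(a)=\lcm_{j\in S'}(a^j-1)$ by the induction hypothesis.

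The crux is the inductive step. Using $\lcm(x,y)\cdot\gcd(x,y)=xy$, the desired identity $\lcm(a^{m^\ast}-1,L')=\Phi_{m^\ast}(a)\cdot L'$ is equivalent to $\gcd(L',a^{m^\ast}-1)=\prod_{d\mid m^\ast,\,d<m^\ast}\Phi_d(a)$, the last product being exactly $(a^{m^\ast}-1)/\Phi_{m^\ast}(a)$. Here the distributive law and the $\gcd$ identity do the work: $\gcd(L',a^{m^\ast}-1)=\lcm_{j\in S'}\gcd(a^j-1,a^{m^\ast}-1)=\lcm_{j\in S'}(a^{\gcd(j,m^\ast)}-1)$, and since $m^\ast$ is divisibility-maximal the exponents $\gcd(j,m^\ast)$ take exactly the values of the proper divisors of $m^\ast$. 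Thus $\gcd(L',a^{m^\ast}-1)=\lcm_{d\mid m^\ast,\,d<m^\ast}(a^d-1)$, and applying the induction hypothesis to the divisor-closed set of proper divisors of $m^\ast$ turns this into $\prod_{d\mid m^\ast,\,d<m^\ast}\Phi_d(a)$, as required. I expect this reverse inequality to be the only real obstacle: the integers $\Phi_k(a)$ are far from pairwise coprime (for instance $\gcd(\Phi_2(a),\Phi_6(a))$ can exceed $1$), so a priori the product $\prod_{k\in S}\Phi_k(a)$ could overshoot the $\lcm$, and what rescues the equality is precisely the self-referential application of the hypothesis to the proper-divisor set — which is also the reason one must strengthen the statement to all divisor-closed sets rather than induct on $m$ alone.
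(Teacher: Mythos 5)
Your proof is correct, but it takes a genuinely different route from the paper's. The paper argues by induction on $m$: it splits $\prod_{k=1}^{m-1}\Phi_k(a)$ into the factors indexed by divisors of $m$ (which are absorbed into $a^m-1$) and those indexed by non-divisors, and then the whole burden is carried by an external result (Ge, Theorem 5): $\gcd(\Phi_k(a),\Phi_l(a))=1$ unless $k/l$ is a prime power, which makes $\Phi_m(a)$ coprime to the non-divisor part and lets the $\lcm$ become a product. You never invoke any coprimality property of cyclotomic values; instead you strengthen the statement to arbitrary finite divisor-closed sets $S$, induct on $|S|$, and run the computation through three elementary facts: $\gcd(a^i-1,a^j-1)=a^{\gcd(i,j)}-1$, distributivity of $\gcd$ over $\lcm$, and $\gcd(x,y)\lcm(x,y)=xy$. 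Your key observations — that maximality of $m^\ast$ under divisibility forces each $\gcd(j,m^\ast)$ to be a proper divisor of $m^\ast$, that divisor-closedness makes every proper divisor occur, and that the induction hypothesis can be re-applied to the proper-divisor set to identify $\gcd(L',a^{m^\ast}-1)$ with $(a^{m^\ast}-1)/\Phi_{m^\ast}(a)$ — are all sound, and the edge cases (base case $S=\{1\}$, signs for $a\le 1$) are handled or at least correctly flagged. The trade-off: the paper's proof is a few lines but leans on a nontrivial cited theorem about cyclotomic polynomials; yours is longer but self-contained and elementary, proves a more general statement, and in effect re-derives exactly the coprimality information the paper imports — your closing remark that the $\Phi_k(a)$ are not pairwise coprime (e.g.\ $\gcd(\Phi_2(a),\Phi_6(a))$ can exceed $1$) correctly identifies why some such input, derived or cited, is unavoidable.
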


\begin{proof}
We proceed by induction with an obvious base. We have
\begin{gather*}
\lcm(a-1,a^2-1,\ldots,a^m-1)=\lcm\left(\prod_{k=1}^{m-1}\Phi_k(a),a^m-1\right)=\\
\left(\prod_{d\mid m, d<m}\Phi_d(a)\right)\cdot\lcm\left(\prod_{k\nmid m, k<m}\Phi_k(a),\Phi_m(a)\right).
\end{gather*}
By \cite[Theorem 5]{Ge} we have $\gcd(\Phi_k(a),\Phi_l(a))=1$ unless $\frac{k}{l}$ is a prime power. Therefore,
\[
\gcd\left(\prod_{k\nmid m, k<m}\Phi_k(a),\Phi_m(a)\right)=1,
\]
and
\[
\lcm\left(\prod_{k\nmid m, k<m}\Phi_k(a),\Phi_m(a)\right)=\Phi_m(a)\prod_{k\nmid m, k<m}\Phi_k(a),
\]
which finishes the proof.
\end{proof}

Recall that the \emph{exponent} of a (finite) group $G$ is the least common multiple of the orders of elements of $G$.

\begin{theorem}\cite{Marshall, Niven}\label{fermat_matr}
The exponent of $\GL(m,\mathbb F_p)$ equals
\[
p^{\lceil\log_pm\rceil}\lcm(p-1,p^2-1,\ldots,p^m-1)=p^{\lceil\log_pm\rceil}\prod_{k=1}^m\Phi_k(p).
\]
\qed
\end{theorem}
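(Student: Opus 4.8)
The plan is to compute the exponent of $\GL(m,\mathbb{F}_p)$ by analyzing the orders of its elements through their behavior under two complementary decompositions: the Jordan–type structure over the algebraic closure, and the splitting of an element into its semisimple and unipotent parts. Since the exponent is the $\lcm$ of all element orders, and every element $A$ of a linear algebraic group over a perfect field admits a unique multiplicative Jordan decomposition $A = A_s A_u$ with $A_s$ semisimple, $A_u$ unipotent, and $A_s A_u = A_u A_s$, the order of $A$ is the product $\ord(A_s)\cdot\ord(A_u)$ (the two factors being coprime once we check that $\ord(A_u)$ is a power of $p$ and $\ord(A_s)$ is prime to $p$). So I would prove the theorem by separately determining the maximal possible $p$-part and prime-to-$p$ part of the orders.

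First I would handle the \textbf{unipotent part}. A unipotent $A_u \in \GL(m,\mathbb{F}_p)$ satisfies $(A_u - I)^m = 0$, so $A_u = I + N$ with $N$ nilpotent of index at most $m$. Then $A_u^{p^e} = I + N^{p^e}\cdot(\ldots)$; more precisely, working in characteristic $p$, one gets $\ord(A_u) = p^e$ where $p^e$ is the least power with $p^e \geq (\text{nilpotency index of }N)$, and the maximal nilpotency index is exactly $m$ (achieved by a single Jordan block). Hence the largest unipotent order is $p^{\lceil \log_p m\rceil}$, giving the $p^{\lceil\log_p m\rceil}$ factor. This is the cleanest step.

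Next I would handle the \textbf{semisimple part}, which accounts for the $\lcm(p-1,\ldots,p^m-1)$ factor. A semisimple element is diagonalizable over $\overline{\mathbb{F}_p}$ with eigenvalues in some extension $\mathbb{F}_{p^d}$. For $A_s$ to be $\GL(m,\mathbb{F}_p)$-rational, its characteristic polynomial must lie in $\mathbb{F}_p[X]$, so its eigenvalues are partitioned into Galois orbits; an eigenvalue in $\mathbb{F}_{p^d}$ contributes a block of size (a multiple of) $d$, and $\ord$ of such an eigenvalue divides $p^d - 1$. The key combinatorial point is that the orders realizable by semisimple elements are exactly the $\lcm$'s of element-orders in cyclic groups $\mathbb{F}_{p^{d_1}}^\times,\ldots,\mathbb{F}_{p^{d_r}}^\times$ with $d_1 + \cdots + d_r \leq m$, and taking all such partitions shows the $\lcm$ of realizable semisimple orders is $\lcm(p-1,p^2-1,\ldots,p^m-1)$: each $p^k-1$ is hit by a single irreducible block of degree $k$ embedded as a companion matrix, and no larger order can occur since any block of degree $d\leq m$ has eigenvalue-order dividing $p^d-1$.

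The main obstacle will be rigorously justifying that these two maxima can be attained \emph{simultaneously} in a way that makes the exponent equal to their product, rather than merely bounded by it. Concretely, I must ensure that the $p^{\lceil\log_p m\rceil}$ factor and the full $\lcm(p-1,\ldots,p^m-1)$ factor are both divisors of the exponent, which requires exhibiting (or arguing the existence of) elements of the right orders; here one must be careful that forcing a large unipotent block consumes dimension that could otherwise host large semisimple blocks, so the argument for the $\lcm$ divisibility must use that each $\Phi_k(p) \mid p^k - 1$ is captured by a \emph{separate} element (the exponent is an $\lcm$ over all elements, so we need not fit everything into one matrix). Finally, I would invoke the preceding Proposition to rewrite $\lcm(p-1,\ldots,p^m-1) = \prod_{k=1}^m \Phi_k(p)$, converting the $\lcm$ form into the product form and completing the proof.
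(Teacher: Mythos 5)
Your proposal is correct, but note that the paper itself offers no proof of this statement: it is quoted from the cited sources (Marshall, Niven) with the \qed indicating a known result, so there is no internal argument to compare against. Your sketch is essentially the standard proof and it is sound in all its key steps: the multiplicative Jordan decomposition $A=A_sA_u$ over the perfect field $\mathbb F_p$ (or, more elementarily, the $p$-part/$p'$-part decomposition of $A$ inside the cyclic group $\langle A\rangle$) gives $\ord(A)=\ord(A_s)\ord(A_u)$ with coprime factors; in characteristic $p$ one has $(I+N)^{p^e}=I+N^{p^e}$, so unipotent orders are exactly the $p$-powers up to $p^{\lceil\log_p m\rceil}$, with the full Jordan block attaining the maximum; semisimple orders divide $\lcm(p-1,\dots,p^m-1)$ because each Galois orbit of eigenvalues of size $d\leq m$ has order dividing $p^d-1$, and each $p^k-1$ is attained by the companion matrix of the minimal polynomial of a generator of $\mathbb F_{p^k}^\times$ padded by the identity. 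You also correctly identified and resolved the one genuine subtlety: the exponent is an $\lcm$ over all group elements, so the unipotent and semisimple maxima need not be realized by a single matrix, only each divisor by some matrix; this is exactly why no dimension-counting conflict arises. Incidentally, the construction you use for the lower bound (companion matrix of the minimal polynomial of a generator of $\mathbb F_{p^k}^\times$, padded by $I$) is the same device the paper uses later, in the converse direction of its Theorem \ref{thm_Korselt}.
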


From now on we assume that $m\geq2$.

Let us introduce the following notation:

\[
D_m(n)=\prod_{k=1}^m\Phi_k(n),
\]
\[
\nabla_m(n)=\prod_{p\mid n}p^{\lceil\log_pm\rceil-1},
\]
\[
K_m(n)=n\nabla_m(n)D_m(n).
\]
In this notation, the exponent of $\GL(m,\mathbb F_p)$ equals $K_m(p)$.

Notice also that if $p\geq m$, then $p^{\lceil\log_pm\rceil-1}=1$. Therefore,
\[
\nabla_m(n)=\prod_{p\mid n,\ p<m}p^{\lceil\log_pm\rceil-1}.
\]

\begin{example}
1) We have $\nabla_2(n)=1$, $D_2(n)=(n-1)(n+1)$, thus $K_2(n)=n(n-1)(n+1)$.

2) We have $\nabla_3(n)=1$ for $n$ odd, $\nabla_3(n)=2$ for $n$ even, and $D_3(n)=(n-1)(n+1)(n^2+n+1)$. Therefore  $K_3(n)=n(n-1)(n+1)(n^2+n+1)$ for $n$ odd and $K_3(n)=2n(n-1)(n+1)(n^2+n+1)$ for $n$ even.
\end{example}

\begin{definition}
A composite number $n\in\mathbb N$ is called an \emph{$m$-Carmichael number} if $A^{K_m(n)}=I$ for all $A\in\GL(m,\mathbb Z/n\mathbb Z)$.
\end{definition}

First, we show that any prime power is an $m$-Carmichael number. For this purpose, we need two simple lemmas.

\begin{lemma}\label{lemma_cycl_div}
If $a\in\mathbb Z$, $k\in \mathbb N$, then $D_m(a)\mid D_m(a^k)$.
\end{lemma}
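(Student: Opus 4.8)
The plan is to reduce the claim to the elementary divisibility $x-1\mid x^k-1$ by first rewriting $D_m$ as a least common multiple via the Proposition proved above. Concretely, I would apply that Proposition twice: once directly to get
\[
D_m(a)=\lcm(a-1,a^2-1,\ldots,a^m-1),
\]
and once with $a$ replaced by $a^k$ to get
\[
D_m(a^k)=\lcm\bigl((a^k)-1,(a^k)^2-1,\ldots,(a^k)^m-1\bigr)=\lcm(a^k-1,a^{2k}-1,\ldots,a^{mk}-1).
\]
This turns a statement about products of cyclotomic values into a statement about lcm's of the simple integers $a^j-1$, which is where the divisibility becomes transparent.

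Next I would fix $j$ with $1\leq j\leq m$ and set $x=a^j$. Since $x-1\mid x^k-1$, we have $a^j-1\mid a^{jk}-1$. But $a^{jk}-1$ is exactly the $j$th argument of the lcm defining $D_m(a^k)$, so it divides $D_m(a^k)$; composing the two divisibilities gives $a^j-1\mid D_m(a^k)$ for every $j$ in the range. Finally, since a least common multiple divides every common multiple of its arguments, from $a^j-1\mid D_m(a^k)$ for all $j=1,\ldots,m$ I conclude $\lcm_{1\le j\le m}(a^j-1)=D_m(a)\mid D_m(a^k)$, which is the desired statement.

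There is essentially no hard step here: the whole argument is immediate once the Proposition is invoked to convert $D_m$ into an lcm. The only points requiring a moment's care are purely bookkeeping: divisibility is understood up to sign (units), so no issue arises when $a<0$ makes some $\Phi_k(a)$ negative, and the degenerate case $a=1$ (where every factor $a^j-1$ vanishes and $D_m(1)=0$) is consistent with the convention $0\mid 0$ and $\lcm(0,\ldots,0)=0$. Thus the ``obstacle'', such as it is, is merely to phrase the reduction cleanly rather than to overcome any genuine difficulty.
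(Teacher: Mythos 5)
Your proof is correct, but it follows a genuinely different route from the paper. The paper argues at the level of polynomials: all roots of $D_m(X)=\prod_{k=1}^m\Phi_k(X)$ are simple, and every root of $D_m(X)$ (a root of unity of order at most $m$) is also a root of $D_m(X^k)$, since raising such a root of unity to the $k$th power can only decrease its order; hence $D_m(X)\mid D_m(X^k)$ already in $\mathbb Z[X]$ (using that $D_m$ is monic), and the integer statement follows by substituting $X=a$. You instead stay entirely in $\mathbb Z$: you invoke the Proposition twice to rewrite both $D_m(a)$ and $D_m(a^k)$ as least common multiples, reduce to the elementary divisibility $a^j-1\mid a^{jk}-1$, and finish with the universal property of the lcm. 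What the paper's argument buys is a stronger conclusion (polynomial divisibility) that specializes uniformly to every integer $a$ — negative, $0$, or $1$ — with no sign or degeneracy conventions, and it is independent of the Proposition. What your argument buys is elementarity: no appeal to roots of unity or to specialization of polynomial identities, just reuse of a result already proved. The price is exactly the bookkeeping you flag — the lcm is conventionally nonnegative while $\Phi_k(a)$ can be negative, and $a=1$ forces the $0\mid 0$ convention — but since the Proposition itself, as stated for all $a\in\mathbb Z$, carries the same up-to-sign caveat, your proof is at the same level of rigor as the statement it relies on, and in the paper's actual use case ($a=p$ prime) none of these caveats arise.
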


\begin{proof}
Consider $D_m(X)=\prod_{k=1}^m\Phi_k(X)\in\mathbb Z[X]$. Then, since all roots of $D_m(X)$ are simple, and each root of $D_m(X)$ is a root of $D_m(X^k)$, we have $D_m(X)\mid D_m(X^k)$. Since the polynomial $D_m(X)$ is monic, this implies the lemma.
\end{proof}

\begin{lemma}\label{lemma_matr_mod_p_pow}
Let $B\in\Mat(m,\mathbb Z)$, $B\equiv I\mod p$. Then for any $k\in\mathbb N$ we have $B^{p^{k-1}}\equiv I\mod p^k$.
\end{lemma}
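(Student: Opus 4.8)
The plan is to write $B = I + pC$ with $C \in \Mat(m,\mathbb Z)$ and argue by induction on $k$. The base case $k = 1$ is exactly the hypothesis $B \equiv I \mod p$, since $B^{p^0} = B$. For the inductive step I would assume $B^{p^{k-1}} \equiv I \mod p^k$, write this as $B^{p^{k-1}} = I + p^k D$ for some $D \in \Mat(m,\mathbb Z)$, and deduce the statement for $k+1$.

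To carry out the step, I would raise to the $p$th power: $B^{p^k} = (B^{p^{k-1}})^p = (I + p^k D)^p$. Because $I$ is central, the two summands commute and the binomial theorem applies verbatim for matrices, yielding
\[
(I + p^k D)^p = I + p \cdot p^k D + \sum_{j=2}^{p} \binom{p}{j} p^{jk} D^j.
\]
The $j = 1$ term equals $p^{k+1} D$, and for every $j \geq 2$ one has $jk \geq 2k \geq k+1$ (using $k \geq 1$), so each higher term is divisible by $p^{k+1}$. Hence $B^{p^k} \equiv I \mod p^{k+1}$, which completes the induction.

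The argument is genuinely elementary, and there is no serious obstacle; the only point requiring any care is the $p$-adic bookkeeping in the binomial expansion, which reduces to the single inequality $2k \geq k+1$ that forces every term past the first to vanish modulo $p^{k+1}$. One could instead expand $B^{p^{k-1}} = (I+pC)^{p^{k-1}}$ in a single step and invoke the standard valuation $v_p\binom{p^{k-1}}{j} = (k-1) - v_p(j)$ for $1 \le j \le p^{k-1}$, checking that $j - v_p(j) \geq 1$; but the inductive route avoids this formula entirely and treats $p = 2$ uniformly with no special case, so I would prefer it.
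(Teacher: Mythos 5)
Your proof is correct and follows essentially the same route as the paper's: the paper also establishes $B^p \equiv I \bmod p^2$ via the binomial theorem and then inducts, which is exactly your inductive step written out in full detail. The only difference is that you make the $p$-adic bookkeeping (the inequality $jk \geq k+1$ for $j \geq 2$) explicit, which the paper leaves to the reader.
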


\begin{proof}
By the binomial theorem, we have $B^p\equiv I\mod p^2$. Then use induction.
\end{proof}

\begin{proposition}\label{prop_prime_pow}
If $k\in\mathbb N$, $k>1$, then $p^k$ is an $m$-Carmichael number.
\end{proposition}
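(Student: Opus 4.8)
The plan is to compare the order of an arbitrary $A\in\GL(m,\mathbb Z/p^k\mathbb Z)$ with $K_m(p^k)$ by reducing modulo $p$ and then lifting. Write $n=p^k$. Since $p$ is the only prime dividing $n$, we have $\nabla_m(p^k)=p^{\lceil\log_p m\rceil-1}$, so that
\[
K_m(p^k)=p^{\,k+\lceil\log_p m\rceil-1}\,D_m(p^k).
\]
By Theorem \ref{fermat_matr}, the exponent of $\GL(m,\mathbb F_p)$ equals $e:=K_m(p)=p^{\lceil\log_p m\rceil}D_m(p)$. The whole argument will amount to showing that the order of $A$ divides $e\,p^{k-1}$, and that $e\,p^{k-1}\mid K_m(p^k)$.

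First I would reduce $A$ modulo $p$. Fix a lift $\widetilde A\in\Mat(m,\mathbb Z)$ of $A$; its reduction lies in $\GL(m,\mathbb F_p)$, so by Theorem \ref{fermat_matr} we get $\widetilde A^{\,e}\equiv I\pmod p$. Setting $B=\widetilde A^{\,e}$, we have $B\equiv I\pmod p$, and Lemma \ref{lemma_matr_mod_p_pow} then yields $B^{\,p^{k-1}}\equiv I\pmod{p^k}$. Consequently $A^{\,e\,p^{k-1}}=I$ in $\GL(m,\mathbb Z/p^k\mathbb Z)$, i.e. the order of $A$ divides $e\,p^{k-1}$.

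It then remains to check the divisibility $e\,p^{k-1}\mid K_m(p^k)$, from which $A^{K_m(p^k)}=I$ follows immediately for every $A$. Here I compute $e\,p^{k-1}=p^{\,k+\lceil\log_p m\rceil-1}D_m(p)$, so the power of $p$ occurring in $e\,p^{k-1}$ is identical to the one in $K_m(p^k)$, and the divisibility reduces to $D_m(p)\mid D_m(p^k)$. This last relation is precisely Lemma \ref{lemma_cycl_div} with $a=p$. Since $n=p^k$ with $k>1$ is composite, this completes the proof.

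I do not expect a genuine obstacle, as the two preliminary lemmas together with Theorem \ref{fermat_matr} carry all the weight. The single point that must be handled with care is the passage from the mod-$p$ statement to the mod-$p^k$ statement: one must track that the exponent contributed by the reduction step ($e$) and the extra factor $p^{k-1}$ from Lemma \ref{lemma_matr_mod_p_pow} combine to give exactly the $p$-power appearing in $K_m(p^k)$, leaving only the cyclotomic divisibility $D_m(p)\mid D_m(p^k)$ to settle.
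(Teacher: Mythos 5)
Your proposal is correct and follows essentially the same route as the paper's own proof: reduce modulo $p$, apply Theorem \ref{fermat_matr} to get $\widetilde A^{K_m(p)}\equiv I\pmod p$, lift with Lemma \ref{lemma_matr_mod_p_pow}, and conclude via $\nabla_m(p^k)=\nabla_m(p)$ together with Lemma \ref{lemma_cycl_div}. The only difference is cosmetic—you phrase the final divisibility through explicit $p$-power exponents rather than through $\nabla_m$ directly.
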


\begin{proof}
Let $A\in\Mat(m,\mathbb Z)$, $\gcd(\det A, p)=1$. By Theorem \ref{fermat_matr}, we have $B:=A^{K_m(p)}\equiv I\mod p$. Therefore, by Lemma \ref{lemma_matr_mod_p_pow} we have $B^{p^{k-1}}\equiv I\mod p^k$. Since $\nabla_m(p^k)=\nabla_m(p)$ and, by Lemma \ref{lemma_cycl_div}, $D_m(p)\mid D_m(p^k)$, the equation $B^{p^{k-1}}=A^{p^k\nabla_m(p)D_m(p)}\equiv I\mod p^k$ implies $A^{K_m(p^k)}=A^{p^k\nabla_m(p^k)D_m(p^k)}\equiv I\mod p^k$.
\end{proof}

Now, we present the main theorem of the paper, a Korselt type criterion for a number to be $m$-Carmichael.

\begin{theorem}\label{thm_Korselt}
Let $n\in\mathbb N$ be composite. The following are equivalent:

(1) $n$ is an $m$-Carmichael number,

(2) if $p\mid n$, then $D_m(p)\mid K_m(n)$.
\end{theorem}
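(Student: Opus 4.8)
The plan is to prove the equivalence by analyzing the exponent of $\GL(m, \mathbb{Z}/n\mathbb{Z})$ and relating it to $K_m(n)$. Since $n$ is composite, write its factorization $n = \prod_i p_i^{k_i}$. By the Chinese Remainder Theorem, $\GL(m, \mathbb{Z}/n\mathbb{Z}) \cong \prod_i \GL(m, \mathbb{Z}/p_i^{k_i}\mathbb{Z})$, so that $A^{K_m(n)} = I$ holds for all $A$ if and only if the exponent of each factor $\GL(m, \mathbb{Z}/p_i^{k_i}\mathbb{Z})$ divides $K_m(n)$. Thus the whole theorem reduces to understanding the exponent $E(p^k)$ of $\GL(m, \mathbb{Z}/p^k\mathbb{Z})$ for a single prime power, and checking when $E(p^k) \mid K_m(n)$.

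First I would determine $E(p^k)$. The reduction map $\GL(m, \mathbb{Z}/p^k\mathbb{Z}) \to \GL(m, \mathbb{F}_p)$ is surjective with kernel the principal congruence subgroup $\{B : B \equiv I \bmod p\}$, which is a $p$-group. The exponent of $\GL(m, \mathbb{F}_p)$ is $K_m(p) = p^{\lceil \log_p m\rceil} D_m(p)$ by Theorem \ref{fermat_matr}. For the $p$-part coming from the kernel, Lemma \ref{lemma_matr_mod_p_pow} shows that any $B \equiv I \bmod p$ satisfies $B^{p^{k-1}} \equiv I \bmod p^k$; combined with the existence of elements of order exactly $p^{k-1}$ in the kernel (e.g. a unipotent $I + p^{k-1}N$-type construction, or more carefully a single Jordan block reduced mod $p^k$), this pins down the $p$-part of $E(p^k)$. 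I expect $E(p^k) = p^{k-1} \cdot p^{\lceil \log_p m\rceil} \cdot D_m(p) = p^k \nabla_m(p) D_m(p) = K_m(p^k)$, so that the single-prime-power exponent is exactly $K_m(p^k)$. This already recovers Proposition \ref{prop_prime_pow} as the case $n = p^k$.

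With $E(p^k) = K_m(p^k)$ established, the criterion becomes: $n$ is $m$-Carmichael if and only if $K_m(p^k) \mid K_m(n)$ for every prime power $p^k \| n$. Now I would unwind the definition $K_m(n) = n \nabla_m(n) D_m(n)$ and compare the $p$-adic valuations on both sides of the divisibility, splitting $D_m$ from the $p$-power factors. The $p$-power parts ($n$ and $\nabla_m$) should match up automatically from the structure of $K_m(n)$ versus $K_m(p^k)$ — here the fact that $n$ is \emph{composite} (so $p^k$ is a proper factor, or there is another prime present) is what makes the $p$-power part of $K_m(n)$ large enough to absorb $K_m(p^k)$'s $p$-power part. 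After cancelling these, the essential surviving condition is exactly $D_m(p) \mid K_m(n)$, which is statement (2). The translation from "$K_m(p^k) \mid K_m(n)$ for all $p^k \| n$" to "$D_m(p) \mid K_m(n)$ for all $p \mid n$" is the computational heart.

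The main obstacle I anticipate is the careful bookkeeping of $p$-adic valuations in the reduction from $K_m(p^k) \mid K_m(n)$ to $D_m(p) \mid K_m(n)$: one must verify that the extra $p$-power factors in $K_m(p^k)$ (namely $p^k \nabla_m(p^k)$) are always divisible into the corresponding factors of $K_m(n)$ regardless of whether $n$ contributes that $p$-power via its own factorization or via the composite/multiple-prime structure. A subtle point is that $D_m(p)$ itself may carry factors of $p$ (for small primes $p < m$), so the clean separation of "$p$-part" and "$D_m(p)$-part" is not completely orthogonal; I would handle this by arguing that Lemma \ref{lemma_cycl_div} and the explicit exponent $\lceil \log_p m\rceil$ control exactly how much of a $p$-power is already supplied, ensuring the $p$-power factor of $K_m(n)$ suffices. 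Establishing $E(p^k) = K_m(p^k)$ — specifically the lower bound, i.e. exhibiting an element of the full claimed order — is the other place requiring genuine care rather than routine manipulation.
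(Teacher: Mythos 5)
Your overall strategy --- reduce via the Chinese remainder theorem to computing the exponent $E(p^k)$ of $\GL(m,\mathbb Z/p^k\mathbb Z)$ and then compare it with $K_m(n)$ --- is viable, but the proposal contains a genuine error that breaks the planned reduction. The exponent is \emph{not} $K_m(p^k)$. Your first two expressions are the right ones: for $m\geq 2$ one can show $E(p^k)=p^{k-1}\cdot p^{\lceil\log_p m\rceil}\cdot D_m(p)=p^k\nabla_m(p)D_m(p)$. But $K_m(p^k)=p^k\nabla_m(p^k)D_m(p^k)=p^k\nabla_m(p)D_m(p^k)$, and $D_m(p^k)$ is strictly larger than $D_m(p)$ for $k\geq2$; Lemma \ref{lemma_cycl_div} gives only the divisibility $D_m(p)\mid D_m(p^k)$, not equality. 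Consequently your intermediate criterion, ``$n$ is $m$-Carmichael iff $K_m(p^{\ord_p(n)})\mid K_m(n)$ for every $p\mid n$,'' is false: it is strictly stronger than being $m$-Carmichael. Concretely, take $m=2$ and $n=45=3^2\cdot5$. This $n$ satisfies condition (2), and indeed is $2$-Carmichael (family 3 of Proposition \ref{prop_2_Carm_7}), yet $K_2(3^2)=9\cdot8\cdot10=720$ does not divide $K_2(45)=45\cdot44\cdot46=91080$, since the $2$-adic valuations are $4$ and $3$ respectively. So the step you call the ``computational heart'' --- translating ``$K_m(p^k)\mid K_m(n)$ for all $p^k\,\|\,n$'' into ``$D_m(p)\mid K_m(n)$ for all $p\mid n$'' --- cannot be carried out: the two conditions are not equivalent.

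The error is repairable, and the repaired argument is essentially the paper's proof reorganized around the exponent. From the upper bound $E(p^k)\mid p^k\nabla_m(p)D_m(p)$ (Theorem \ref{fermat_matr} plus Lemma \ref{lemma_matr_mod_p_pow}) and the fact that $D_m(p)$ is coprime to $p$ (each $\Phi_j(p)$ divides $p^j-1$, so your worry that $D_m(p)$ may carry factors of $p$ when $p<m$ is unfounded --- and this coprimality is exactly what makes the bookkeeping trivial), condition (2) yields $p^{\ord_p(n)}\nabla_m(p)D_m(p)\mid K_m(n)$, hence $E(p^{\ord_p(n)})\mid K_m(n)$ for all $p\mid n$; this is the paper's direction (2)$\Rightarrow$(1). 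For the converse you need the lower bound $D_m(p)\mid E(p^k)$, i.e., elements of order $p^j-1$ in $\GL(m,\mathbb Z/p^k\mathbb Z)$ for each $j\leq m$; this is precisely the companion-matrix construction in the paper's direction (1)$\Rightarrow$(2), and it is where the real work lies. Note also that the exact $p$-part of $E(p^k)$ (your Jordan-block element of maximal $p$-power order) is never needed: the sandwich $D_m(p)\mid E(p^k)\mid p^k\nabla_m(p)D_m(p)$ already suffices for the equivalence.
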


\begin{proof}
1) Let $D_m(p)\mid K_m(n)$ for all $p\mid n$. Since $p^{\ord_p(n)}\nabla_m(p)\mid K_m(n)$ and $\gcd(p^{\ord_p(n)}\nabla_m(p),D_m(p))=1$, we also have $p^{\ord_p(n)}\nabla_m(p)D_m(p)\mid K_m(n)$ for all $p\mid n$.

Now consider $A\in\Mat(m,\mathbb Z)$, $\gcd(\det A, n)=1$. By Theorem \ref{fermat_matr}, we have $A^{p\nabla_m(p)D_m(p)}\equiv I\mod p$ for all $p\mid n$. By Lemma \ref{lemma_matr_mod_p_pow}, this implies $A^{p^{\ord_p(n)}\nabla_m(p)D_m(p)}\equiv I\mod p^{\ord_p(n)}$, and thus, $A^{K_m(n)}\equiv I\mod p^{\ord_p(n)}$ for all $p\mid n$. By the Chinese remainder theorem, this implies $A^{K_m(n)}\equiv I\mod n$. Therefore, $n$ is an $m$-Carmichael number.

2) Conversely, assume that for some $p\mid n$ we have $D_m(p)\nmid K_m(n)$. Since $D_m(p)=\lcm(p-1,p^2-1,\ldots,p^m-1)$, there exists $k\in\{1, 2,\ldots, m\}$ such that $p^k-1\nmid K_m(n)$.

We construct an $A\in\GL(m,\mathbb Z/n\mathbb Z)$ of order $p^k-1$. Therefore, $A^{K_m(n)}\neq I$, and $n$ is not $m$-Carmichael.

To this end, let $\alpha$ be a generator of the cyclic group $\mathbb F_{p^k}^\times$. Consider the polynomial $(X-\alpha)(X-\alpha^p)\ldots(X-\alpha^{p^{k-1}})\in \mathbb F_p[X]$, and let $B\in\GL(k,\mathbb F_p)$ be its accompanying matrix. Then $B$ is of order $p^k-1$, and the same is true for $C=\diag(B,I)\in\GL(m,\mathbb F_p)$. Lift $C$ to an element of $\Mat(m,\mathbb Z)$, so in particular $C^{p^k-1}\equiv I\mod p$. By Lemma \ref{lemma_matr_mod_p_pow}, we have $(C^{p^{\ord_p(n)-1}})^{p^k-1}\equiv I\mod p^{\ord_p(n)}$. Moreover, since $\gcd(p^{\ord_p(n)}, p^k-1)=1$, we see that $C^{p^{\ord_p(n)-1}}\mod p^{\ord_p(n)}$ is also of order $p^k-1$. Finally, by the Chinese remainder theorem, consider $A\in\GL(m,\mathbb Z/n\mathbb Z)$ such that $A\equiv C^{p^{\ord_p(n)-1}}\mod p^{\ord_p(n)}$ and, for example, $A\equiv I\mod \frac{n}{p^{\ord_p(n)}}$. By construction, $A$ is of order $p^k-1$, which finishes the proof.
\end{proof}

\begin{remark}
Proposition \ref{prop_prime_pow} also easily follows from Theorem \ref{thm_Korselt}.
\end{remark}

Moreover, applying Theorem \ref{thm_Korselt} and Lemma \ref{lemma_cycl_div}, we get

\begin{corollary}\label{cor_powers}
If $n$ is an $m$-Carmichael number, and $k\in\mathbb N$, then $n^k$ is also an $m$-Carmichael number.\qed
\end{corollary}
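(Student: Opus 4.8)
The plan is to reduce everything to the Korselt criterion of Theorem \ref{thm_Korselt} and then invoke the divisibility $D_m(a)\mid D_m(a^k)$ supplied by Lemma \ref{lemma_cycl_div}. By Theorem \ref{thm_Korselt}, the number $n^k$ is $m$-Carmichael precisely when $D_m(p)\mid K_m(n^k)$ for every prime $p\mid n^k$. The first observation I would record is that $n^k$ has exactly the same set of prime divisors as $n$; in particular, since $\nabla_m$ depends only on that set, we have $\nabla_m(n^k)=\nabla_m(n)$. Hence it suffices to establish $D_m(p)\mid K_m(n^k)$ for every prime $p\mid n$.

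The heart of the argument is the claim that $K_m(n)\mid K_m(n^k)$. To see this, I would simply compute the quotient
\[
\frac{K_m(n^k)}{K_m(n)}=\frac{n^k\,\nabla_m(n^k)\,D_m(n^k)}{n\,\nabla_m(n)\,D_m(n)}=n^{k-1}\cdot\frac{D_m(n^k)}{D_m(n)},
\]
where the second equality uses $\nabla_m(n^k)=\nabla_m(n)$. By Lemma \ref{lemma_cycl_div} applied with $a=n$, the factor $D_m(n^k)/D_m(n)$ is an integer, and $n^{k-1}$ is manifestly an integer, so the quotient is an integer and the claim follows.

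Finally, since $n$ is assumed $m$-Carmichael, Theorem \ref{thm_Korselt} gives $D_m(p)\mid K_m(n)$ for every prime $p\mid n$. Combining this with $K_m(n)\mid K_m(n^k)$ yields $D_m(p)\mid K_m(n^k)$ for every such $p$, which—noting that $n^k$ is composite whenever $n$ is—is exactly condition (2) of Theorem \ref{thm_Korselt} for $n^k$. Therefore $n^k$ is $m$-Carmichael. I do not expect a genuine obstacle here: the only point requiring any care is the bookkeeping for the $\nabla_m$ factor, which works out because passing from $n$ to $n^k$ leaves the radical unchanged.
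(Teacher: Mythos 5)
Your proof is correct and follows essentially the same route as the paper: the corollary is stated there as an immediate consequence of Theorem \ref{thm_Korselt} combined with Lemma \ref{lemma_cycl_div}, which is exactly your argument, including the key observation that $K_m(n)\mid K_m(n^k)$ because the radical (hence $\nabla_m$) is unchanged and $D_m(n)\mid D_m(n^k)$.
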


Finally, we present one necessary condition for a number to be $m$-Car\-mi\-chael.

\begin{proposition}
Assume that $n$ is an $m$-Carmichael number. Then $n\not\equiv 2\mod 4$.
\end{proposition}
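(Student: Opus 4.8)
The plan is to argue by contradiction using the Korselt-type criterion of Theorem \ref{thm_Korselt} in its negated form. Suppose $n$ is an $m$-Carmichael number with $n\equiv 2\bmod 4$. Since $n$ is composite and $\ord_2(n)=1$, its odd part $n/2\geq 3$ carries an odd prime divisor $p\mid n$. By Theorem \ref{thm_Korselt} we must then have $D_m(p)\mid K_m(n)$, and I will derive a contradiction by comparing the $2$-adic valuations $\ord_2(D_m(p))$ and $\ord_2(K_m(n))$.

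First I would compute $\ord_2(K_m(n))$. Writing $K_m(n)=n\nabla_m(n)D_m(n)$, note that $n$ even forces every $n^k-1$ to be odd, so $D_m(n)$ is odd and contributes $0$; meanwhile $\ord_2(n)=1$, and the only even contribution to $\nabla_m(n)=\prod_{q\mid n}q^{\lceil\log_q m\rceil-1}$ comes from $q=2$, giving $\ord_2(\nabla_m(n))=\lceil\log_2 m\rceil-1$. Hence $\ord_2(K_m(n))=\lceil\log_2 m\rceil$.

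Next I would bound $\ord_2(D_m(p))$ from below. Since $D_m(p)=\lcm(p-1,p^2-1,\ldots,p^m-1)$, it suffices to exhibit one index $k\leq m$ with large $\ord_2(p^k-1)$. I take $k=2^s$ with $s=\lfloor\log_2 m\rfloor\geq 1$, which is even and satisfies $k\leq m$. Factoring $p^{2^s}-1=(p-1)(p+1)\prod_{j=1}^{s-1}(p^{2^j}+1)$, I use that exactly one of $p-1,p+1$ is divisible by $4$ (so their combined valuation is at least $3$), and that each $p^{2^j}+1\equiv 2\bmod 4$ for $j\geq 1$ (because $p^{2^j}\equiv 1\bmod 8$), contributing exactly $1$ apiece. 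This yields $\ord_2(D_m(p))\geq\ord_2(p^{2^s}-1)\geq 3+(s-1)=\lfloor\log_2 m\rfloor+2$.

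Finally I combine the two estimates. Because $\lceil\log_2 m\rceil\leq\lfloor\log_2 m\rfloor+1$, we obtain $\ord_2(D_m(p))\geq\lfloor\log_2 m\rfloor+2>\lceil\log_2 m\rceil=\ord_2(K_m(n))$, so $D_m(p)\nmid K_m(n)$, contradicting Theorem \ref{thm_Korselt}. The main obstacle is the $2$-adic bookkeeping in the third step: one must select the even index $k\leq m$ carrying the maximal power of $2$ and control $\ord_2(p^k-1)$ (equivalently, via the lifting-the-exponent lemma), and then verify that the resulting bound $\lfloor\log_2 m\rfloor+2$ strictly exceeds $\lceil\log_2 m\rceil$, which hinges on the gap between $\lceil\log_2 m\rceil$ and $\lfloor\log_2 m\rfloor$ never exceeding $1$.
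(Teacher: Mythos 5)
Your proof is correct and follows essentially the same route as the paper: both compare $2$-adic valuations, establishing $\ord_2(K_m(n))=\lceil\log_2 m\rceil$ and $\ord_2(D_m(p))\geq\lfloor\log_2 m\rfloor+2$ for an odd prime $p\mid n$, and conclude via Theorem \ref{thm_Korselt}. The only cosmetic difference is that you extract the power of $2$ from the telescoping factorization of $p^{2^{\lfloor\log_2 m\rfloor}}-1$ inside the $\lcm$, whereas the paper reads off the same factors $p-1$, $p+1$, $p^{2^{j}}+1$ as the cyclotomic values $\Phi_{2^j}(p)$ in the product $D_m(p)=\prod_{k=1}^m\Phi_k(p)$.
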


\begin{proof}
Let $n\in\mathbb N$ be composite, $n\equiv 2\mod 4$. Then $D_m(n)$ is odd, and thus $\ord_2(K_m(n))=\lceil\log_2m\rceil$.

On the other hand, consider an odd $p\mid n$. Since $\Phi_{2^k}(p)=p^{2^{k-1}}+1$ is even for $k\geq 1$, and $8\mid\Phi_1(p)\Phi_2(p)=p^2-1$, we have $\ord_2(D_m(p))\geq\lfloor\log_2m\rfloor+2>\lceil\log_2m\rceil$. Thus, $D_m(p)\nmid K_m(n)$, and $n$ is not $m$-Carmichael.
\end{proof}

\section{$m$-Carmichael numbers having prescribed prime factors}\label{sec_2e}
The divisibility condition in Theorem \ref{thm_Korselt} for small values of $m$ is transparent enough to find some infinite families of $m$-Carmichael numbers (apart of prime powers).

Let us start with $m=2$. Denote by $d_2(p, n)$ the condition $D_2(p)\mid K_2(n)$, i.e., $p^2-1\mid n(n^2-1)$. We have

\begin{itemize}
\item $d_2(2, n)$ is $3\mid n(n^2-1)$, satisfied for all $n$.
\item $d_2(3, n)$ is $8\mid n(n^2-1)$, satisfied if and only if $n$ is odd or $8\mid n$.
\item $d_2(5, n)$ is $3\cdot8\mid n(n^2-1)$, again satisfied if and only if $n$ is odd or $8\mid n$.
\item $d_2(7, n)$ is $3\cdot16\mid n(n^2-1)$, satisfied if and only if $n\equiv \pm1\mod 8$ or $16\mid n$.
\item $d_2(11, n)$ is $3\cdot5\cdot8\mid n(n^2-1)$, satisfied if and only if $d_2(5, n)$ and $5\mid n(n^2-1)$ are satisfied; the latter is satisfied if and only if $n\equiv 0,\pm1\mod 5$.
\end{itemize}

Using the above the following propositions are proved by a direct application of Theorem \ref{thm_Korselt}.

\begin{proposition}\label{prop_2_Carm_7}
Let $n\in\mathbb N$ be a composite $7$-smooth number which is not a prime power. Then $n$ is $2$-Carmichael if and only if $n$ belongs to one of the following families:

1) $n=2^k\cdot3^l\cdot5^r$, where $k\geq3$,

2) $n=2^k\cdot3^l\cdot5^r\cdot7^s$, where $k\geq4$, $s\geq1$,

3) $n=3^l\cdot5^r$,

4) $n=3^l\cdot5^r\cdot7^s$, where $l\equiv r\mod 2$, $s\geq1$.\qed
\end{proposition}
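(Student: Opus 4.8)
The plan is to apply Theorem \ref{thm_Korselt} directly. Since $n$ is $7$-smooth, its only possible prime divisors are $2,3,5,7$, so $n$ is $2$-Carmichael if and only if the condition $d_2(p,n)$ holds for each $p\in\{2,3,5,7\}$ that actually divides $n$. I would organize the case analysis according to the parity of $n$, since the listed conditions $d_2(3,n)$, $d_2(5,n)$, $d_2(7,n)$ all dichotomize on whether $n$ is odd or on the $2$-adic valuation of $n$, while $d_2(2,n)$ is always vacuous.

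First I would treat the odd case (no factor of $2$), writing $n=3^l5^r7^s$. Here $d_2(3,n)$ and $d_2(5,n)$ hold automatically because $n$ is odd, so the only nontrivial requirement arises when $7\mid n$, namely $n\equiv\pm1\pmod 8$ (the alternative $16\mid n$ being impossible for odd $n$). The key computation is to reduce $n$ modulo $8$: since $3^2\equiv5^2\equiv7^2\equiv1\pmod8$, the residue $n\equiv 3^{l}5^{r}7^{s}\pmod 8$ depends only on the parities of $l,r,s$, and a short check in the Klein four-group $(\mathbb Z/8\mathbb Z)^\times$ shows that $n\equiv\pm1\pmod8$ holds exactly when $l\equiv r\pmod2$, independently of $s$. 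This yields family 3) when $s=0$ and family 4) when $s\geq1$.

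Next I would treat the even case, writing $n=2^k3^l5^r7^s$ with $k\geq1$. Now $n$ is even, so $d_2(3,n)$ and $d_2(5,n)$ force $8\mid n$, i.e. $k\geq3$, whenever $3\mid n$ or $5\mid n$, while $d_2(7,n)$ forces $16\mid n$, i.e. $k\geq4$, whenever $7\mid n$. Since $n$ is not a prime power, at least one odd prime divides it. If $7\nmid n$, the binding constraint is $k\geq3$, giving family 1); if $7\mid n$, the constraint $k\geq4$ subsumes the others, giving family 2).

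The four cases are mutually exclusive and exhaust all composite non-prime-power $7$-smooth $n$, so the characterization is complete. The only step requiring genuine care is the modulo $8$ computation in the odd case; once the equivalence $n\equiv\pm1\pmod8\iff l\equiv r\pmod2$ is established, everything else is bookkeeping organized by which of $2,3,5,7$ divide $n$.
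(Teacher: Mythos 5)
Your proof is correct and follows essentially the same route as the paper: the paper's entire proof is the remark that the proposition follows ``by a direct application of Theorem \ref{thm_Korselt}'' using the listed conditions $d_2(p,n)$, and you have simply carried out that application in full, organizing the cases by parity and by which of $3,5,7$ divide $n$. Your key computation in $(\mathbb Z/8\mathbb Z)^\times$ (that for odd $n=3^l5^r7^s$ one has $n\equiv\pm1\bmod 8$ exactly when $l\equiv r\bmod 2$, independently of $s$) is exactly the detail the paper leaves to the reader, and it checks out.
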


\begin{proposition}\label{prop_2_Carm_11}
Let $n\in\mathbb N$ be a composite $11$-smooth number which is not $7$-smooth and not a prime power. Then $n$ is $2$-Carmichael if and only if $n$ belongs to one of the following families:

1) $n=2^k\cdot3^l\cdot5^r\cdot11^t$, where $k\geq3$, $r\geq1$,

2) $n=2^k\cdot3^l\cdot5^r\cdot7^s\cdot11^t$, where $k\geq4$, $r\geq1$, $s\geq1$,

3) $n=2^k\cdot3^l\cdot11^t$, where $k\geq3$, $k\equiv l\mod 2$,


4) $n=2^k\cdot3^l\cdot7^s\cdot11^t$, where $k\geq4$, $s\geq1$, $k+l+s$ is even,

5) $n=3^l\cdot5^r\cdot11^t$, where $r\geq1$,

6) $n=3^l\cdot5^r\cdot7^s\cdot11^t$, where $r\geq1$, $s\geq1$, $l+r+t$ is even,

7) $n=3^l\cdot11^t$, where $l$ is even,


8) $n=3^l\cdot7^s\cdot11^t$, where 
$s\geq1$, $l\equiv s\equiv t\mod 2$.\qed
\end{proposition}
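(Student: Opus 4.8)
The plan is to apply Theorem \ref{thm_Korselt} directly: writing $n = 2^k 3^l 5^r 7^s 11^t$, the number $n$ is $2$-Carmichael if and only if $d_2(p,n)$ holds for every prime $p$ dividing $n$. Since $n$ is $11$-smooth but not $7$-smooth we have $t\geq 1$ and $p\in\{2,3,5,7,11\}$, while the hypothesis that $n$ is not a prime power forces at least one of $k,l,r,s$ to be positive. First I would prune the list of relevant conditions using the itemized computations preceding the proposition. The condition $d_2(2,n)$ is vacuous; $d_2(3,n)$ and $d_2(5,n)$ both reduce to the single condition (A): $n$ is odd or $8\mid n$; and, because $t\geq 1$, the condition $d_2(11,n)$ is always present and splits into (A) together with $5\mid n(n^2-1)$, which I call (C). The only genuinely new requirement is $d_2(7,n)$, call it (B), imposed exactly when $s\geq 1$. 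Hence $n$ is $2$-Carmichael iff (A) and (C) hold, with (B) holding in addition whenever $s\geq 1$.

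Next I would translate (A), (B), (C) into congruences on the exponents. Condition (A) reads $k=0$ or $k\geq 3$. For (C): if $r\geq 1$ it is automatic, while if $r=0$ one computes $n\equiv 2^{k+3l+s}\pmod 5$ (using $3\equiv 2^3$, $7\equiv 2$, $11\equiv 1\pmod 5$ and that $2$ is a primitive root), so $n\equiv\pm1\pmod5$ is equivalent to $k+l+s$ being even. For (B) with $s\geq 1$: in the even case the inequality $k\geq 3$ must be upgraded to $16\mid n$, i.e. $k\geq 4$; in the odd case $k=0$ one needs $n\equiv\pm1\pmod 8$, and since $3,11\equiv 3$, $5\equiv 5$, $7\equiv 7\pmod 8$ all square to $1$, one finds $n\equiv\pm1\pmod 8$ iff $l+t\equiv r\pmod 2$ (the exponent $s$ cancels).

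With these dictionaries in hand the conclusion follows by splitting into the $2\times2\times2$ cases according to the parity of $n$ (odd $k=0$ versus even $k\geq 3$), whether $5\mid n$, and whether $7\mid n$, and reading off the surviving exponent conditions. For example, $n$ even with $5\mid n$ and $7\nmid n$ gives family 1; the same with $7\mid n$ upgrades $k\geq 3$ to $k\geq 4$ and gives family 2; $n$ odd with $5\nmid n$ and $7\mid n$ combines $l+t\equiv 0$ and $l+s$ even into $l\equiv s\equiv t\pmod 2$, which is family 8; and similarly through all eight families.

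The routine but essential work is the two modular computations (mod $8$ for (B) and mod $5$ for (C)) that convert divisibility into parity conditions on the exponents; these are where an error would most easily creep in, so I would verify each against the examples in the $d_2(p,n)$ list. The only other point demanding care is bookkeeping around the ``not a prime power'' hypothesis: for instance in family 7, $n=3^l 11^t$ with $l$ even, the value $l=0$ would yield the prime power $11^t$ and is excluded, so such degenerate subcases must be discarded when matching the case split to the stated families.
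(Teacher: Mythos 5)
Your proposal is correct and takes essentially the same route as the paper: the paper disposes of this proposition with the single remark that it follows ``by a direct application of Theorem \ref{thm_Korselt}'' to the itemized conditions $d_2(p,n)$, which is exactly your reduction to conditions (A), (B), (C) followed by exponent-parity bookkeeping. Your write-up merely supplies the modular computations (mod $5$ and mod $8$) that the paper leaves implicit, and both of them check out.
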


Now consider $m = 3, 4$. We restrict ourselves to composite numbers of the form $n = 2^k3^l$.

\begin{proposition}\label{prop_3_Carm}
Let $n = 2^k3^l$, where $k, l\geq1$. Then $n$ is $3$-Carmichael if and only if $k\geq2$, and $(k, l)$ belongs to one of the following families:

1) $k\equiv 0\mod 12$, $l\equiv 0, \pm2, 3\mod 6$,

2) $k\equiv \pm2\mod 12$, $l\equiv \pm4\mod 6$,

3) $k\equiv \pm4\mod 12$, $l\equiv 0, \pm1, \pm2, \pm4\mod 6$,

4) $k\equiv 6\mod 12$, $l\equiv 0\mod 3$.
\end{proposition}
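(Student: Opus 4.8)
The plan is to apply the Korselt criterion (Theorem \ref{thm_Korselt}) directly: $n=2^k3^l$ is $3$-Carmichael if and only if $D_3(2)\mid K_3(n)$ and $D_3(3)\mid K_3(n)$. So the first task is to compute these two target quantities. From Example 1 (part 2), $D_3(p)=(p-1)(p+1)(p^2+p+1)$, giving $D_3(2)=1\cdot3\cdot7=21=3\cdot7$ and $D_3(3)=2\cdot4\cdot13=104=2^3\cdot13$. Meanwhile $K_3(n)=2\,n\,(n-1)(n+1)(n^2+n+1)$ since $n$ is even, so $K_3(n)=2\cdot2^k3^l\,(2^k3^l-1)(2^k3^l+1)((2^k3^l)^2+2^k3^l+1)$.

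Next I would reduce each divisibility to a congruence condition on $(k,l)$ by examining the relevant primes. For $D_3(2)=3\cdot7$: the factor $3$ is automatic (since $3^l\mid n\mid K_3(n)$ with $l\geq1$), so the real constraint is $7\mid K_3(n)$. One checks $7\mid n$ never (as $n=2^k3^l$), so $7$ must divide one of the cyclotomic factors $(n-1)$, $(n+1)$, or $(n^2+n+1)$; this translates into a condition on $n\bmod7$, hence on $2^k3^l\bmod7$. Using that $2$ has order $3$ and $3$ has order $6$ modulo $7$, this becomes a condition on $(k\bmod3,\ l\bmod6)$, or more cleanly on $k,l$ modulo appropriate divisors of $6$. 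For $D_3(3)=2^3\cdot13$: the factor $2^3$ requires $\ord_2(K_3(n))\geq3$; since $K_3(n)=2n(n^2-1)(n^2+n+1)$ with $n$ even, $\ord_2(n)=k$ and the odd-order factors contribute, so one finds $2^3\mid K_3(n)$ holds precisely when $k\geq2$ (this is the source of the stated $k\geq2$ requirement). The factor $13$ forces $13$ to divide one of $(n-1),(n+1),(n^2+n+1)$, i.e. a condition on $n\bmod13$; here $2$ has order $12$ and $3$ has order $3$ modulo $13$, so this becomes a condition on $(k\bmod12,\ l\bmod3)$.

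The heart of the computation is then to intersect these congruence conditions and repackage the solution set $(k,l)$ into the four families listed. The $13$-condition will govern the primary split by $k\bmod12$ (the four cases $k\equiv0,\pm2,\pm4,6$), since $\ord_{13}(2)=12$ controls how $2^k\bmod13$ cycles, while the $7$-condition and the residue of $3^l$ modulo $13$ and modulo $7$ refine the admissible $l\bmod6$ in each case. Concretely, in each of the four $k$-classes I would compute $2^k\bmod7$ and $2^k\bmod13$, then solve for which $l\bmod6$ (equivalently $3^l\bmod7$, $3^l\bmod{13}$) make both $7\mid K_3(n)$ and $13\mid K_3(n)$ hold simultaneously, and verify the resulting $l$-conditions match those stated in families (1)--(4).

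The main obstacle is purely bookkeeping: assembling the combined constraints from the two independent primes $7$ and $13$ (with their different orders $3,6$ and $12,3$ for the bases $2,3$) and checking that the somewhat irregular-looking answer—e.g.\ the asymmetry between $l\equiv0,\pm2,3$ in family (1) versus $l\equiv0\bmod3$ in family (4)—is exactly what the simultaneous congruences produce. I expect no conceptual difficulty beyond careful case analysis modulo $12$ for $k$ and modulo $6$ for $l$, together with a consistent treatment of the $2$-adic valuation yielding the uniform hypothesis $k\geq2$.
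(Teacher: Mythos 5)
Your proposal is correct and follows essentially the same route as the paper: compute $D_3(2)=3\cdot7$ and $D_3(3)=2^3\cdot13$, apply Theorem \ref{thm_Korselt} to reduce to divisibility of $K_3(n)=2n(n^2-1)(n^2+n+1)$ by $2^3$, $3$, $7$, $13$, extract $k\geq2$ from the $2$-adic valuation, and convert the conditions modulo $7$ and $13$ into congruences on $k\bmod12$ and $l\bmod6$ via the orders of $2$ and $3$ in $\mathbb F_7^\times$ and $\mathbb F_{13}^\times$. The paper likewise leaves the final case enumeration as "a direct calculation," so your level of detail matches its proof.
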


\begin{proposition}\label{prop_4_Carm}
Let $n = 2^k3^l$, where $k, l\geq1$. Then $n$ is $4$-Carmichael if and only if $k\geq3$, and $(k, l)$ belongs to one of the families 1) -- 4) in Proposition \ref{prop_3_Carm} or to one of the following families:

5) $k\equiv \pm1\mod 12$, $l\equiv \pm2\mod 6$,

6) $k\equiv \pm3\mod 12$, $l\equiv 0\mod 3$,

7) $k\equiv \pm5\mod 12$, $l\equiv \pm4\mod 6$.
\end{proposition}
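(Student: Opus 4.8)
The plan is to apply the Korselt criterion (Theorem \ref{thm_Korselt}) directly. Since $n = 2^k 3^l$ has exactly the prime divisors $2$ and $3$, the number $n$ is $4$-Carmichael if and only if $D_4(2)\mid K_4(n)$ and $D_4(3)\mid K_4(n)$. First I would record the explicit data. From $\lceil\log_2 4\rceil = \lceil\log_3 4\rceil = 2$ we get $\nabla_4(n) = 2\cdot 3 = 6$, hence $K_4(n) = 2^{k+1}3^{l+1}D_4(n)$ with $D_4(n) = (n-1)(n+1)(n^2+n+1)(n^2+1)$; moreover $D_4(2) = 1\cdot 3\cdot 7\cdot 5 = 3\cdot 5\cdot 7$ and $D_4(3) = 2\cdot 4\cdot 13\cdot 10 = 2^4\cdot 5\cdot 13$. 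Thus the two divisibilities amount to the single condition $2^4\cdot 3\cdot 5\cdot 7\cdot 13 \mid K_4(n)$, which I would then test one prime at a time.

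Next I would dispatch the conditions prime by prime. The factor $3$ is automatic from $3^{l+1}$, and $5$ is automatic since $5\mid n^4-1\mid D_4(n)$ by Fermat. Because $n$ is even, $D_4(n)$ is odd, so $\ord_2 K_4(n) = k+1$ and the $2$-part $2^4\mid K_4(n)$ is equivalent to $k\geq 3$. The remaining two primes give genuine congruence conditions: as $7,13\nmid 2^{k+1}3^{l+1}$, we have $7\mid K_4(n)\iff 7\mid D_4(n)$ and $13\mid K_4(n)\iff 13\mid D_4(n)$, and since $q\mid D_4(n)=\lcm(n-1,\ldots,n^4-1)$ exactly when $\ord_q(n)$ divides one of $1,2,3,4$, these read $\ord_7(n)\in\{1,2,3\}$ and $\ord_{13}(n)\in\{1,2,3,4\}$ respectively.

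To turn these into congruences on $(k,l)$ I would use primitive roots. Modulo $7$, $3$ is primitive and $2\equiv 3^2$, so $n\equiv 3^{2k+l}$ and $\ord_7(n)=6$ precisely when $2k+l\equiv\pm1\pmod 6$; hence the $7$-condition is $2k+l\not\equiv\pm1\pmod 6$. Modulo $13$, $2$ is primitive and $3\equiv 2^4$, so $n\equiv 2^{k+4l}$ and $\ord_{13}(n)=12/\gcd(12,k+4l)$; the $13$-condition becomes $k+4l\equiv 0,3,4,6,8,9\pmod{12}$. It is clean to compare with the $m=3$ analysis behind Proposition \ref{prop_3_Carm}, where the conditions are identical except that the $2$-part only forces $k\geq 2$ and the order-$13$ condition is $\ord_{13}(n)\in\{1,2,3\}$, i.e.\ $k+4l\equiv 0,4,6,8\pmod{12}$. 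Writing $D_4(n)=D_3(n)\,\Phi_4(n)$ with $\Phi_4(n)=n^2+1$, all new solutions come from $\Phi_4(n)$: since $-1$ is a nonsquare mod $7$ but a square mod $13$ (there $n^2\equiv-1$ means $n\equiv\pm5$, i.e.\ $\ord_{13}(n)=4$), the $7$-condition is unchanged while the $13$-condition gains exactly the case $\ord_{13}(n)=4$, i.e.\ $k+4l\equiv 3,9\pmod{12}$.

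Finally I would enumerate. The solution set for $m=4$ is the set of $(k,l)$ meeting the $m=3$ conditions but now with $k\geq 3$, together with the new $(k,l)$ satisfying $k+4l\equiv 3,9\pmod{12}$, the unchanged $7$-condition, and $k\geq 3$. Since $\ord_{13}(n)\in\{1,2,3\}$ forces $k+4l$ even, hence $k$ even, the first part is precisely families 1)--4) of Proposition \ref{prop_3_Carm} (all of which have $k$ even) intersected with $k\geq 3$; since $\ord_{13}(n)=4$ forces $k+4l\equiv 3,9\pmod{12}$ odd, hence $k$ odd, the new part consists of $k$-odd classes which a direct residue computation identifies with families 5)--7). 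The main obstacle is precisely this last bookkeeping: one must run through the residues $k\bmod 12$ and $l\bmod 6$, keep the sign pairings correlated (e.g.\ family 5) is $(k,l)\equiv(1,2)$ or $(-1,-2)$, not the mixed pairs $(1,4)$, $(11,2)$, which fail $k+4l\equiv 3,9$), and verify that each surviving class simultaneously meets $2k+l\not\equiv\pm1\pmod 6$ and $k+4l\equiv 3,9\pmod{12}$. Everything preceding this enumeration is the routine reduction above.
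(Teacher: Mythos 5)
Your proposal is correct and follows essentially the same route as the paper: both apply Theorem \ref{thm_Korselt} to reduce the claim to divisibility of $K_4(n)$ by $2^4$, $3$, $5$, $7$, $13$, and then translate the mod-$7$ and mod-$13$ conditions into congruences on $(k,l)$ modulo $(12,6)$ settled by direct enumeration. Your discrete-logarithm bookkeeping ($n\equiv 3^{2k+l}\bmod 7$, $n\equiv 2^{k+4l}\bmod 13$) and the parity split on $k$ relative to Proposition \ref{prop_3_Carm} simply make explicit the ``direct calculation'' step that the paper leaves to the reader.
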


\noindent\emph{Proof of Propositions \ref{prop_3_Carm} and \ref{prop_4_Carm}.} We have $D_3(2)=3\cdot7$, $D_4(2)=3\cdot5\cdot7$, $D_3(3)=2^3\cdot13$, $D_4(3)=2^4\cdot5\cdot13$, $\nabla_3(n)=2$, $\nabla_4(n)=2\cdot3$. Therefore, $n$ is $3$-Carmichael if and only if $K_3(n)=2n(n^2-1)(n^2+n+1)$ is divisible by $2^3$, $3$, $7$, and $13$, which is equivalent to the conditions $4\mid n$ (i.e., $k\geq2$), $n\equiv\pm1, 2, 4\mod 7$, $n\equiv\pm1, 3, 9\mod 13$. Similarly, $n$ is $4$-Carmichael if and only if $K_4(n)=6n(n^2-1)(n^2+n+1)(n^2+1)$ is divisible by $2^4$, $3$, $5$, $7$, and $13$, which is equivalent to $8\mid n$ (i.e., $k\geq3$), $n\equiv\pm1, 2, 4\mod 7$, $n\equiv\pm1, 3, 9, \pm5\mod 13$. Since $|\mathbb F_7^\times|=6$, $|\mathbb F_{13}^\times|=12$, and $3\mod 13$ is of order $3$, we see that the conditions on $n$ modulo $7$ and $13$ depend only on $k\mod 12$, $l\mod 6$. The corresponding values of $k\mod 12$, $l\mod 6$ are obtained by a direct calculation.\qed

\medskip

Now we describe the general pattern of the distribution of $m$-Carmichael numbers with prescribed prime factors.

Let $P$ be a finite nonempty subset of primes. Denote by $D_m(P)$ the least common multiple of $D_m(p)$ for all $p\in P$.

Let us say that $n\in\mathbb N$ is a \emph{$P$-number}, if $n$ is divisible precisely by the primes in $P$. By Theorem \ref{thm_Korselt}, a $P$-number $n$ is $m$-Carmichael if and only if $D_m(P)\mid K_m(n)$.

Further, write $D_m(P)=D_m'(P)D_m''(P)$, where $D_m'(P)$ is a product of primes in $P$, and $D_m''(P)$ is coprime to all $p\in P$. Then a $P$-number $n$ is $m$-Carmichael if and only if $D_m'(P)\mid K_m(n)$ and $D_m''(P)\mid K_m(n)$.

First, notice that, since $n\mid K_m(n)$, $\nabla_m(P):=\nabla_m(n)$ depends only on $P$, and $D_m'(P)$ is coprime to $D_m(n)$, the condition $D_m'(P)\mid K_m(n)$ is satisfied if $\ord_pn\geq \ord_pD_m'(P)-\ord_p\nabla_m(P)$ for all primes $p\in P$.

Secondly, since a $P$-number $n$ is, by construction, invertible modulo $D_m''(P)$, we see that the condition $D_m''(P)\mid K_m(n)$ depends only on the values of $\ord_pn\mod\lambda(D_m''(P))$ for $p\in P$. Here $\lambda$ is the Carmichael function, i.e., $\lambda(D_m''(P))$ is the exponent of the group $(\mathbb Z/D_m''(P)\mathbb Z)^\times$. Moreover, if $p\in P$, let us denote by $v_{m, P}(p)$ the order of $p\mod D_m''(P)$ in the group $(\mathbb Z/D_m''(P)\mathbb Z)^\times$. Then the condition $D_m''(P)\mid K_m(n)$ depends only on the values of $\ord_pn\mod v_{m, P}(p)$ for $p\in P$.

Thus, we get the following

\begin{theorem}\label{thm_P}
For any $p\in P$, the set of $P$-numbers which are $m$-Car\-mi\-chael is invariant under multiplication by $p^{v_{m, P}(p)}$.\qed
\end{theorem}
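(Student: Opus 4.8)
The plan is to reduce everything to Theorem \ref{thm_Korselt} and then track separately how the two coprime factors $D_m'(P)$ and $D_m''(P)$ of $D_m(P)$ behave when the $p$-adic valuation of $n$ is increased. Concretely, I would fix $p\in P$, set $v=v_{m,P}(p)$ and $n'=np^{v}$, assume $n$ is an $m$-Carmichael $P$-number, and show that $n'$ is again an $m$-Carmichael $P$-number (so the set is closed under multiplication by $p^{v}$). Since multiplication by $p^{v}$ does not change the set of prime divisors, $n'$ is automatically a $P$-number and is still composite, so by the reduction preceding the theorem it suffices to verify the two divisibilities $D_m'(P)\mid K_m(n')$ and $D_m''(P)\mid K_m(n')$, knowing that both hold for $n$.

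First I would record the valuation bookkeeping. For each $q\in P$ we have $q\mid n$, hence $D_m(n)=\prod_{k=1}^m\Phi_k(n)\equiv\prod_{k=1}^m\Phi_k(0)=-1\pmod q$ (using $\Phi_1(0)=-1$ and $\Phi_k(0)=1$ for $k\geq2$), so $\ord_q D_m(n)=0$; likewise $\ord_q D_m(n')=0$. Moreover $\nabla_m(n')=\nabla_m(n)=\nabla_m(P)$ depends only on $P$. Thus $\ord_q K_m(n)=\ord_q n+\ord_q\nabla_m(P)$ and similarly for $n'$, with $\ord_q n'=\ord_q n$ for $q\neq p$ and $\ord_p n'=\ord_p n+v$. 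The condition $D_m'(P)\mid K_m(n)$ is then exactly the threshold inequality $\ord_q n\geq\ord_q D_m'(P)-\ord_q\nabla_m(P)$ for all $q\in P$; passing from $n$ to $n'$ only raises $\ord_p n$ and fixes the other valuations, so the inequality is preserved and $D_m'(P)\mid K_m(n')$ comes for free.

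The crux is the factor $D_m''(P)$, and this is precisely where the special exponent $v_{m,P}(p)$ enters. Because $D_m''(P)$ is coprime to $n$ and to $\nabla_m(P)$, the divisibility $D_m''(P)\mid K_m(n)$ is equivalent to $D_m''(P)\mid D_m(n)$, and since $D_m(X)\in\mathbb Z[X]$ this depends only on $n\bmod D_m''(P)$. Now $n'\equiv n\cdot p^{v}\pmod{D_m''(P)}$ and $p^{v}\equiv1\pmod{D_m''(P)}$ by the very definition of $v_{m,P}(p)$ as the order of $p$ in $(\mathbb Z/D_m''(P)\mathbb Z)^{\times}$; hence $n'\equiv n\pmod{D_m''(P)}$, so $D_m(n')\equiv D_m(n)\pmod{D_m''(P)}$ and the divisibility by $D_m''(P)$ transfers from $n$ to $n'$. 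Combining the two parts, $D_m(P)=D_m'(P)D_m''(P)\mid K_m(n')$, so $n'$ is $m$-Carmichael.

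I do not anticipate a serious obstacle: the content lies entirely in splitting $D_m(P)$ into its $P$-supported part and its $P$-coprime part and noting their opposite behaviors. The one point to get right, and where a naive argument could go astray, is this asymmetry: the $D_m''(P)$-condition is genuinely periodic in $\ord_p n$ with period $v_{m,P}(p)$ (hence invariant in both directions), whereas the $D_m'(P)$-condition is merely a lower bound on $\ord_p n$ and is therefore only preserved when the valuation grows. This is exactly why the statement is phrased as closure under multiplication by $p^{v_{m,P}(p)}$ rather than as a two-sided invariance.
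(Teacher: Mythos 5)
Your proof is correct and takes essentially the same route as the paper: the paper's ``proof'' is exactly the discussion preceding the theorem, which splits $D_m(P)=D_m'(P)D_m''(P)$, notes that the $D_m'(P)$-condition is a valuation threshold (via coprimality of $D_m(n)$ with $n$, hence preserved when $\ord_p n$ increases), and that the $D_m''(P)$-condition depends only on $\ord_p n \bmod v_{m,P}(p)$. You have merely made explicit some details the paper leaves implicit (the computation $\Phi_1(0)=-1$, $\Phi_k(0)=1$ for $k\geq 2$, and the ``if and only if'' form of the threshold inequality), so there is nothing to correct.
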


\begin{corollary}\label{cor_inf}
Assume that there exists a $P$-number which is $m$-Car\-mi\-chael. Then there are infinitely many of them.\qed
\end{corollary}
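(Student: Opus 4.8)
The plan is to derive the corollary directly from Theorem \ref{thm_P} by iterating the multiplication-invariance it provides. Since $P$ is a finite \emph{nonempty} set of primes, I would first fix an arbitrary prime $p\in P$. By hypothesis there exists at least one $P$-number $n$ that is $m$-Carmichael. The key observation is that multiplying such an $n$ by $p^{v_{m,P}(p)}$ yields an integer with exactly the same set of prime divisors---we merely raise the exponent of a prime already present in $P$, introducing no new prime factor and removing none---so the product is again a $P$-number; and by Theorem \ref{thm_P} it is again $m$-Carmichael.

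Next I would iterate this step. Setting $n_k := n\cdot p^{k\,v_{m,P}(p)}$ for $k=0,1,2,\ldots$, a trivial induction using the invariance from Theorem \ref{thm_P} shows that every $n_k$ is an $m$-Carmichael $P$-number. Because $v_{m,P}(p)\geq 1$ (the order of an element in a group is at least $1$), the multiplier $p^{v_{m,P}(p)}\geq p\geq 2$ is strictly greater than $1$, so the sequence $n_0<n_1<n_2<\cdots$ is strictly increasing and hence consists of pairwise distinct integers. This exhibits infinitely many $P$-numbers that are $m$-Carmichael, as claimed.

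There is essentially no obstacle here: all the content is carried by Theorem \ref{thm_P}, and the corollary merely packages the invariance into an explicit infinite family. The only points worth verifying are the two elementary facts used above---that raising the power of a prime factor already lying in $P$ preserves the property of being a $P$-number, and that the multiplier $p^{v_{m,P}(p)}$ is nontrivial, which guarantees the distinctness needed to conclude that the family is genuinely infinite rather than eventually repeating.
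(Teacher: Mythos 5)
Your proof is correct and is essentially the paper's intended argument: the corollary is stated with an immediate \qed precisely because iterating the multiplication-invariance of Theorem \ref{thm_P} (with any fixed $p\in P$) produces a strictly increasing sequence of $m$-Carmichael $P$-numbers. The paper's subsequent remark notes an alternative route via Corollary \ref{cor_powers} (taking powers $n^k$), but your argument matches the primary one.
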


\begin{remark}
Corollary \ref{cor_inf} follows also from Corollary \ref{cor_powers}.
\end{remark}

All propositions of this section can be viewed as examples to Theorem \ref{thm_P}. E.g., for $m=3$ and $P=\{2,3\}$ we have $D_m'(P)=2^3\cdot3$, $D_m''(P)=7\cdot13$, and $\lambda(7\cdot13)=\lcm(6,12)=12$, $v_{m, P}(2)=12$, $v_{m, P}(3)=6$, which is in accordance with Proposition \ref{prop_3_Carm}.


\section{Concluding remarks}\label{sec_4}
We list some natural questions that remain open.
\begin{itemize}
\item For what 
$m$ and $P$ are there $P$-numbers which are $m$-Carmichael?
\item Are there squarefree $m$-Carmichael numbers for $m\geq3$?
\end{itemize}

\begin{remark}
One can consider an analogous notion for other affine group schemes of finite type defined over $\mathbb Z$. Namely, if $G$ is such a group scheme, $K_G(p)$ the exponent of the group $G(\mathbb F_p)$, and $K_G(n)$ its reasonable extra\-po\-lation to all $n\in\mathbb N$, then one can consider $G$-Carmichael numbers, i.e., composite $n\in\mathbb N$ such that $g^{K_G(n)}=1$ for all $g\in G(\mathbb Z/n\mathbb Z)$.
\end{remark}

\appendix
\section{Numerical experiments}\label{sec_3}
We also calculate, via brute force, all $m$-Carmichael numbers up to $10^5$ for $2\leq m\leq10$. Let us call an $m$-Carmichael number \emph{nontrivial} if it is not a prime power. There are $1330$ nontrivial $2$-Carmichael numbers, $44$ nontrivial $3$-Carmichael numbers, and $28$ nontrivial $4$-Carmichael numbers on the researched interval.
There are none nontrivial $m$-Carmichael numbers for $5\leq m\leq10$ on the researched interval.

Among $16$ Carmichael numbers less than $10^5$, four, namely
\begin{gather*}
1729=7\cdot13\cdot19, \quad 2465=5\cdot17\cdot29, \\
6601=7\cdot23\cdot41, \quad 41041=7\cdot11\cdot13\cdot41,
\end{gather*}
are $2$-Carmichael. None of these Carmichael numbers are $m$-Carmichael for $3\leq m\leq10$. Moreover, none of $m$-Carmichael numbers for $3\leq m\leq10$ on the researched interval are squarefree.

There are $18$ numbers on the researched interval, namely

\medskip
\begin{tabular}{lll}
$48=2^4\cdot3$, & $144=2^4\cdot3^2$, & $1296=2^4\cdot3^4$, \\
$1728=2^6\cdot3^3$, & $2304=2^8\cdot3^2$, & $5760=2^7\cdot3^2\cdot5$, \\
$9216=2^{10}\cdot3^2$, & $11664=2^4\cdot3^6$, & $20736=2^8\cdot3^4$, \\
$25600=2^{10}\cdot5^2$, & $27000=2^3\cdot3^3\cdot5^3$, & $30720=2^{11}\cdot3\cdot5$, \\
$34992=2^4\cdot3^7$, & $36864=2^{12}\cdot3^2$, & $46656=2^6\cdot3^6$, \\
$62208=2^8\cdot3^5$, & $96768=2^9\cdot3^3\cdot7$, & $99225=3^4\cdot5^2\cdot7^2$,
\end{tabular}
\medskip

\noindent that are nontrivial $m$-Carmichael numbers for all $m\in\{2, 3, 4\}$, and one number, $22815=3^3\cdot5\cdot13^2$, that is nontrivial $3$-Carmichael and $4$-Carmichael, but not $2$-Carmichael.

Also, on the researched interval there are $19$ numbers, namely

\medskip
\begin{tabular}{lll}
$160=2^5\cdot5$, & $448=2^6\cdot7$, & $704=2^6\cdot11$, \\
$800=2^5\cdot5^2$, & $1056=2^5\cdot3\cdot11$, & $2640=2^4\cdot3\cdot5\cdot11$, \\
$3136=2^6\cdot7^2$, & $5929=7^2\cdot11^2$, & $7744=2^6\cdot11^2$, \\
$18144=2^5\cdot3^4\cdot7$, & $20000=2^5\cdot5^4$, & $21952=2^6\cdot7^3$, \\
$28672=2^{12}\cdot7$, & $29952=2^8\cdot3^2\cdot13$, & $31744=2^{10}\cdot31$, \\
$34496=2^6\cdot7^2\cdot11$, & $39424=2^9\cdot7\cdot11$, & $45056=2^{12}\cdot11$, \\
$85184=2^6\cdot11^3$,
\end{tabular}
\medskip

\noindent that are nontrivial $2$- and $3$-Carmichael, but not $4$-Carmichael;
$8$ numbers, namely
$216=2^3\cdot3^3$, $1152=2^7\cdot3^2$, $2592=2^5\cdot3^4$, $4000=2^5\cdot5^3$, $5832=2^3\cdot3^6$, $13824=2^9\cdot3^3$, $28800=2^7\cdot3^2\cdot5^2$, $73728=2^{13}\cdot3^2$,
that are nontrivial $2$- and $4$-Carmichael, but not $3$-Carmichael;
$6$ numbers, namely
$324=2^2\cdot3^4$, $900=2^2\cdot3^2\cdot5^2$, $1404=2^2\cdot3^3\cdot13$, $39204=2^2\cdot3^4\cdot11^2$, $74088=2^3\cdot3^3\cdot7^3$, $74536=2^3\cdot7\cdot11^3$,
that are nontrivial $3$-Carmichael, but not $2$- or $4$-Carmichael.
Finally, one number, $26112=2^9\cdot3\cdot17$, is nontrivial $4$-Carmichael, but not $2$- or $3$-Carmichael.

	
Below we list all nontrivial $2$-Carmichael numbers up to $3000$ that are not treated by Propositions \ref{prop_2_Carm_7} and \ref{prop_2_Carm_11} (i.e., not $11$-smooth).

\medskip

\begin{tabular}{lll}
$104=2^3\cdot13$ & $171=3^2\cdot19$ & $195=3\cdot5\cdot13$ \\
$273=3\cdot7\cdot13$ & $351=3^3\cdot13$ & $435=3\cdot5\cdot29$ \\
$455=5\cdot7\cdot13$ & $609=3\cdot7\cdot29$ & $615=3\cdot5\cdot41$ \\
$624=2^4\cdot3\cdot13$ & $665=5\cdot7\cdot19$ & $715=5\cdot11\cdot13$ \\
$736=2^5\cdot23$ & $759=3\cdot11\cdot23$ & $832=2^6\cdot13$ \\
$855=3^2\cdot5\cdot19$ & $903=3\cdot7\cdot43$ & $1001=7\cdot11\cdot13$ \\
$1015=5\cdot7\cdot29$ & $1045=5\cdot11\cdot19$ & $1071=3^2\cdot7\cdot17$ \\
$1088=2^6\cdot17$ & $1183=7\cdot13^2$ & $1216=2^6\cdot19$ \\
$1265=5\cdot11\cdot23$ & $1352=2^3\cdot13^2$ & $1377=3^4\cdot17$ \\
$1431=3^3\cdot53$ & $1456=2^4\cdot7\cdot13$ & $1520=2^4\cdot5\cdot19$ \\
$1539=3^4\cdot19$ & $1560=2^3\cdot3\cdot5\cdot13$ & $1595=5\cdot11\cdot29$ \\
$1625=5^3\cdot13$ & $1729=7\cdot13\cdot19$ & $1856=2^6\cdot29$ \\
$1881=3^2\cdot11\cdot19$ & $1911=3\cdot7^2\cdot13$ & $1984=2^6\cdot31$ \\
$2001=3\cdot23\cdot29$ & $2009=7^2\cdot41$ & $2015=5\cdot13\cdot31$ \\
$2080=2^5\cdot5\cdot13$ & $2211=3\cdot11\cdot67$ & $2255=5\cdot11\cdot41$ \\
$2365=5\cdot11\cdot43$ & $2375=5^3\cdot19$ & $2457=3^3\cdot7\cdot13$ \\
$2465=5\cdot17\cdot29$ & $2535=3\cdot5\cdot13^2$ & $2565=3^3\cdot5\cdot19$ \\
$2624=2^6\cdot41$ & $2639=7\cdot13\cdot29$ & $2736=2^4\cdot3^2\cdot19$ \\
$2808=2^3\cdot3^3\cdot13$ & $2871=3^2\cdot11\cdot29$ & $2912=2^5\cdot7\cdot13$ \\
$2925=3^2\cdot5^2\cdot13$
\end{tabular}

\medskip

We also managed to compute a few larger $m$-Carmichael numbers for $m\geq5$. For instance, $2^{22}\cdot3^2$ is $2$- (by Proposition \ref{prop_2_Carm_7}), $3$- (by Proposition \ref{prop_3_Carm}), $4$- (by Proposition \ref{prop_4_Carm}), $5$- and $6$-Carmichael, but not $7$- or $8$-Carmichael. Similarly, $2^{286}\cdot3^{36}$ is $2$-, $6$-, $7$-, and $8$-Carmichael, but not $3$-, $4$-, or $5$-Car\-mi\-chael.



\end{sloppy}
\end{document}